\theoremstyle{plain}
\newtheorem{theorem}{Theorem}[section]
\newtheorem{proposition}[theorem]{Proposition}
\newtheorem{lemma}[theorem]{Lemma}
\newtheorem{corollary}[theorem]{Corollary}
\newtheorem{definition}[theorem]{Definition}
\newtheorem{remark}[theorem]{Remark}
\newtheorem*{proof}{Proof}
\newcommand{\eps}{\varepsilon}
\DeclareMathOperator*{\argmin}{arg\,min}
\DeclareMathOperator{\range}{Ran}
\newcommand{\field}[1]{\mathbb{#1}}
\newcommand{\R}{\field{R}}
\newcommand{\inner}[3][n]{\SwitchBracketsizeLeft{#1}\LeftBracketSize\langle#2,#3\SwitchBracketsizeRight{#1}\RightBracketSize\rangle}
\newcommand{\abs}[2][n]{\SwitchBracketsizeLeft{#1}\LeftBracketSize\lvert#2\SwitchBracketsizeRight{#1}\RightBracketSize\rvert}
\newcommand{\norm}[2][n]{\SwitchBracketsizeLeft{#1}\LeftBracketSize\lVert#2\SwitchBracketsizeRight{#1}\RightBracketSize\rVert}
\newcommand{\set}[3][b]{\SwitchBracketsizeLeft{#1}\LeftBracketSize\{#2:#3\SwitchBracketsizeRight{#1}\RightBracketSize\}}
\newcommand{\NextScriptStyle}[1]{{\scriptstyle{#1}}}
\newcommand{\NextScriptScriptStyle}[1]{{\scriptscriptstyle{#1}}}
\newcommand{\NextTextStyle}[1]{{\textstyle{#1}}}
\newcommand{\NextDisplayStyle}[1]{{\displaystyle{#1}}}
\newcommand{\SwitchBracketsizeLeft}[1]{
  \ifthenelse{\equal{#1}{b}\OR\equal{#1}{big}}{\let\LeftBracketSize=\bigl}{
    \ifthenelse{\equal{#1}{B}\OR\equal{#1}{Big}}{\let\LeftBracketSize=\Bigl}{
      \ifthenelse{\equal{#1}{g}\OR\equal{#1}{bigg}}{\let\LeftBracketSize=\biggl}{
    \ifthenelse{\equal{#1}{G}\OR\equal{#1}{Bigg}}{\let\LeftBracketSize=\Biggl}{
      \ifthenelse{\equal{#1}{s}\OR\equal{#1}{small}}{\let\LeftBracketSize=\NextScriptStyle}{
        \ifthenelse{\equal{#1}{ss}}{\let\LeftBracketSize=\NextScriptScriptStyle}{
          \ifthenelse{\equal{#1}{t}\OR\equal{#1}{text}}{\let\LeftBracketSize=\NextTextStyle}{
        \ifthenelse{\equal{#1}{d}\OR\equal{#1}{display}}{\let\LeftBracketSize=\NextDisplayStyle}{
          \ifthenelse{\equal{#1}{a}\OR\equal{#1}{auto}}{\let\LeftBracketSize=\left}{
            \let\LeftBracketSize=\relax}}}}}}}}}}
\newcommand{\SwitchBracketsizeRight}[1]{
  \ifthenelse{\equal{#1}{b}\OR\equal{#1}{big}}{\let\RightBracketSize=\bigr}{
    \ifthenelse{\equal{#1}{B}\OR\equal{#1}{Big}}{\let\RightBracketSize=\Bigr}{
      \ifthenelse{\equal{#1}{g}\OR\equal{#1}{bigg}}{\let\RightBracketSize=\biggr}{
    \ifthenelse{\equal{#1}{G}\OR\equal{#1}{Bigg}}{\let\RightBracketSize=\Biggr}{
      \ifthenelse{\equal{#1}{s}\OR\equal{#1}{small}}{\let\RightBracketSize=\NextScriptStyle}{
        \ifthenelse{\equal{#1}{ss}}{\let\RightBracketSize=\NextScriptScriptStyle}{
          \ifthenelse{\equal{#1}{t}\OR\equal{#1}{text}}{\let\RightBracketSize=\NextTextStyle}{
        \ifthenelse{\equal{#1}{d}\OR\equal{#1}{display}}{\let\RightBracketSize=\NextDisplayStyle}{
          \ifthenelse{\equal{#1}{a}\OR\equal{#1}{auto}}{\let\RightBracketSize=\right}{
            \let\RightBracketSize=\relax}}}}}}}}}}
\title{Variational Inequalities and Improved Convergence Rates for Tikhonov Regularisation on Banach Spaces}
\author{Markus Grasmair\\
\\
\normalsize
\begin{tabular}{c}
  Computational Science Center\\
  University of Vienna\\
  Nordbergstr.~15\\
  A--1090 Vienna, Austria
\end{tabular}
}
\date{July 13, 2011}
\begin{document}

\maketitle

\begin{abstract}
  In this paper we derive higher order convergence rates in terms of the Bregman distance
  for Tikhonov like convex regularisation for linear operator equations
  on Banach spaces.
  The approach is based on the idea of variational inequalities,
  which are, however, not imposed on the original Tikhonov functional,
  but rather on a dual functional.
  Because of that, the approach is not limited to convergence
  rates of lower order, but yields the same range of rates
  that is well known for quadratic regularisation on Hilbert spaces.
\end{abstract}

\section{Introduction}

Classical results in the theory of quadratic Tikhonov regularisation on Hilbert spaces show
that the asymptotic quality of the regularised solution of 
an ill-posed linear operator equation $Ax = y$
depends strongly on the smoothness of the true solution $x^\dagger$
with respect to the operator $A\colon X \to Y$:
If $x^\dagger$ satisfies the range condition 
\begin{equation}\label{eq:nu}
  x^\dagger\in\range(A^*A)^\nu
\end{equation}
for some $0 < \nu \le 1$ and the accuracy of the
right hand side of the equation is of order $\delta$,
then one obtains, with a suitable choice of
the regularisation parameter, an accuracy of the regularised solution
of order $\delta^{2\nu/(2\nu+1)}$.
More precisely, denote by $y^\delta\in Y$ any right hand side satisfying
$\norm{y-y^\delta} \le \delta$,
and by $x_\alpha^\delta\in X$ with $\alpha > 0$ and $\delta > 0$ any element
satisfying
\[
x_\alpha^\delta = \argmin\set{\norm{Ax-y^\delta}^2 + \alpha\norm{x}^2}{x \in X}
\]
for some $y^\delta$.
Then a choice of the regularisation parameter $\alpha$
of order $\alpha \sim \delta^{2/(2\nu+1)}$ implies that
$\norm{x_\alpha^\delta-x^\dagger} = O(\delta^{2\nu/(2\nu+1)})$
(see for instance~\cite{EngHanNeu96} or~\cite{Gro84}).

The situation becomes more complicated in the case where
the regularisation term is not the squared Hilbert space norm
and, even worse, the regularisation takes place only in a Banach space
and not a Hilbert space setting.
Then Tikhonov regularisation consists in the minimisation of a functional
\[
\mathcal{T}_\alpha(x;y^\delta) = \norm{Ax-y^\delta}^2 + \alpha\mathcal{R}(x)\,,
\]
where $\mathcal{R}$ is some convex and lower semi-continuous functional
on $X$.
The first results that treated convergence rates for
non-quadratic regularisation remained close to the quadratic case.
In~\cite{Neu87} and~\cite{ChaKun94a}, quadratic regularisation
with convexity constraints was discussed, leading to conditions
and results that are very similar to the unconstrained case.
The first result that transcended the Hilbert space setting
was the treatise~\cite{EngLan93}, where convergence rates
for maximum entropy regularisation were derived.
The main argument, however, was a translation of the non-quadratic
problem to an equivalent quadratic problem on a Hilbert space.

One of the main difficulty in the generalisation of the convergence rates
result to general convex regularisation methods is that the norm
on $X$, in which the convergence rate was usually measured, need not be
related to the regularisation term that should imply the convergence.
For this reason, it was argued in~\cite{BurOsh04}
that it is more natural to measure the quality of the approximation not
in terms of the norm but rather in terms of the Bregman distance
with respect to the regularisation term $\mathcal{R}$.
Then it is easy to derive, for a parameter choice $\alpha\sim\delta$,
a convergence rate of order $\delta$ provided the range condition 
$\range A^* \cap \partial\mathcal{R}(x^\dagger)\neq \emptyset$ holds.
Here $\partial\mathcal{R}(x^\dagger)$ denotes the sub-differential
of the convex functional $\mathcal{R}$ at $x^\dagger$.
This result corresponds to the case $\nu = 1/2$ in~\eqref{eq:nu}.
In addition, a result with the improved range condition
$\range A^*A\cap\partial\mathcal{R}(x^\dagger) \neq \emptyset$
with $Y$ being a Hilbert space has been derived in~\cite{Res05}
(see also~\cite{Neu09,NeuHeiHofKinTau10} for more general results).

The above results on convergence rates for regularisation
on Banach spaces correspond to the cases $\nu = 1/2$ or $\nu = 1$
in~\eqref{eq:nu}; none of the intermediate cases was treated in~\cite{BurOsh04} or~\cite{Res05}.
There are two main reasons:
First, an intermediate range condition is only possible
if fractional powers of $A$ can be defined, which basically
limits the theory to Hilbert spaces, and, second,
the classical proofs of the intermediate convergence rates
rely on the possibility of writing $x_\alpha^\delta$ as the solution
of a linear operator equation.

An alternative approach for the derivation of convergence rates
in a quadratic Hilbert space setting was introduced in~\cite{Hof06}
(see also~\cite{Sch01a}, where a similar idea was used in a slightly different context)
under the headings of \emph{approximate source conditions}
and \emph{distance functions}.
This approach was used with success in~\cite{Hei09}
(note also the preliminary results in~\cite{Hei08b}),
leading to the whole range of estimates also available for
the quadratic case.
Another alternative, \emph{variational inequalities},
arised in~\cite{HofKalPoeSch07} (see also~\cite[Sec.~3.2]{SchGraGroHalLen09})
from the search for a natural way of extending convergence rates results
to non-linear problems.
In~\cite{BotHof10,Gra10b} it was shown that these variational inequalities
can be used for covering the whole range of \emph{slow} convergence
rates corresponding to the cases $0 < \nu \le 1/2$ in~\eqref{eq:nu}.
The two concepts were also combined in~\cite{FleHof10}
as \emph{approximate variational inequalities},
but the obtained rates were not different from those in~\cite{BotHof10,Gra10b}.

In~\cite{Fle11}, the different new concepts for obtaining (slow) convergence rates
for convex regularisation methods in Banach spaces were discussed
and compared in the case of quadratic regularisation on Hilbert spaces.
It turned out that, in this simple setting, they were equivalent.
In more complicated situations, however, in particular for the regularisation
of non-linear operators, but also for non-convex regularisation (see~\cite{Gra10b}),
variational inequalities might be easier to handle.
It is, however, necessary to show that they can also be used
for deriving fast convergence rates.

In this paper, it is shown that this is indeed possible 
for linear operators on Banach spaces.
The proof and also the formulation of the variational inequality
are based on a duality argument that shows that,
if the standard range condition $\range A^* \cap \partial\mathcal{R}(x^\dagger)$
holds, then the minimisation of the Tikhonov functional $\mathcal{T}_\alpha$
with noisy data $y^\delta$ can be translated to the approximative minimisation
of a dual Tikhonov functional with exact data.
In the special case of a sufficiently smooth regularisation term $\mathcal{R}$
with sufficiently smooth dual $\mathcal{R}^*$,
the ensuing condition for the best possible rates with this method
is almost identical to the range condition for the Hilbert space
corresponding to the case $\nu = 1$ in~\eqref{eq:nu}.

\section{Notation}
                                   
Let $X$ and $Y$ be Banach spaces and $A\colon X\to Y$ a
bounded linear operator.
Moreover assume that $\mathcal{R}\colon X \to [0,+\infty]$
is a convex, lower semi-continuous, and coercive regularisation functional and $p > 1$.
Consider, for given data $y \in Y$ and a regularisation
parameter $\alpha > 0$, the Tikhonov functional
$\mathcal{T}(\cdot;\alpha,y) \colon X \to [0,+\infty]$ defined by
\[
\mathcal{T}_\alpha(x;y) := \frac{1}{p}\norm{Ax-y}^p + \alpha \mathcal{R}(x)\;.
\]

From now on we assume that $y^\dagger \in Y$ are some fixed data,
and we want to solve the equation
\[
Ax = y^\dagger
\]
for $x$.
By
\[
x^\dagger :\in \argmin \set{\mathcal{R}(x)}{Ax = y^\dagger}
\]
we denote any $\mathcal{R}$-minimising solution of the equation $Ax = y^\dagger$.
Moreover, for $\alpha > 0$ we denote by
\[
x_\alpha :\in \argmin\set{\mathcal{T}_\alpha(x;y^\dagger)}{x \in X}
\]
any minimiser of the Tikhonov functional with exact data $y^\dagger$.
Finally, for $\delta > 0$ and $\alpha > 0$ we denote by
\[
x_\alpha^\delta := \in\argmin\set{\mathcal{T}_\alpha(x;y^\delta)}{x \in X}\,,
\]
any minimiser of the Tikhonov functional with perturbed data $y^\delta$,
where $y^\delta \in Y$ is any element satisfying
\[
\norm{y^\delta - y^\dagger} \le \delta\;.
\]
The main goal of this paper is the derivation of \emph{convergence rates},
that is, estimates of the distance between $x_\alpha^\delta$ and $x^\dagger$
depending on the noise level $\delta$ and the regularisation parameter $\alpha$.
Starting with~\cite{BurOsh04}, such estimates have been usually derived in terms
of the Bregman distance defined by the convex regularisation functional $\mathcal{R}$.

\begin{definition}
  Assume that $x \in X$ and $\xi \in \partial\mathcal{R}(x)$.
  Then we define the \emph{Bregman distance} with respect to $\mathcal{R}$
  as the mapping $\mathcal{D}_\xi(\cdot;x) \colon X \to [0,+\infty]$,
  \[
  \mathcal{D}_\xi(\tilde{x};x) = \mathcal{R}(\tilde{x})-\mathcal{R}(x)-\inner{\xi}{\tilde{x}-x}\;.
  \]
  That is, the Bregman distance measures the difference between
  the graph of $\mathcal{R}$ and its affine approximation at $x$.

  In addition, if $x$, $\tilde{x} \in X$ and $\xi \in \partial\mathcal{R}(x)$, 
  $\tilde{\xi}\in\partial\mathcal{R}(\tilde{x})$, then we denote by
  \[
  \mathcal{D}_{\xi,\tilde{\xi}}^{\rm sym}(x,\tilde{x})
  := \mathcal{D}_\xi(\tilde{x};x) + \mathcal{D}_{\tilde{\xi}}(x;\tilde{x})
  \]
  the \emph{symmetrical Bregman distance} between $x$ and $\tilde{x}$
  with respect to $\mathcal{R}$.
  An easy computation shows that the identity
  \[
  \mathcal{D}_{\xi,\tilde{\xi}}^{\rm sym}(x,\tilde{x})
  = \inner{\xi-\tilde{\xi}}{x-\tilde{x}}
  \]
  holds.
\end{definition}

Note that the Bregman distance at $x$ depends on the choice
of the sugradient $\xi \in \partial\mathcal{R}(x)$ 
(unless $\partial\mathcal{R}(x)$ contains a single element).
Moreover it can happen that $\partial\mathcal{R}(x) = \emptyset$,
in which case the Bregman distance cannot be defined.

\section{Duality Mappings}

In addition to the Bregman distance with respect to $\mathcal{R}$,
we will also need the Bregman distance with respect
to the $p$-th power of the norm on the space $Y$.
This Bregman distance can be written in terms of duality mappings.
In this section we recall several results concerning duality mappings
that will be needed for the derivation and interpretation 
of the main result on convergence rates.

Throughout the whole text, whenever $q > 1$ we denote by $q_*$ its
conjugate defined by $1/q + 1/q_* = 1$.

\begin{definition}
  Let $q > 1$. The \emph{duality mapping} $J_q \colon Y \to 2^{Y^*}$
  is defined by
  \[
  J_q(y) = \set{\omega\in Y^*}{\inner{\omega}{y} = \norm{\omega}\norm{y},\ \norm{\omega} = \norm{y}^{q-1}}\;.
  \]
  The \emph{adjoint duality mapping}
  $J_{q_*}^* \colon Y^* \to 2^Y$ is defined by
  \[
  J_{q_*}^*(\omega) = \set{y\in Y}{\inner{\omega}{y} = \norm{\omega}\norm{y},\ \norm{y} = \norm{\omega}^{q_*-1}}\;.
  \]
\end{definition}

If $Y$ is reflexive, one can show that $J_{q_*}^* = (J_q)^{-1}$
(see~\cite[Chap.~1, Thm.~4.4]{Cio90}).
Moreover it is easy to see that $J_q$ is $(q-1)$-homogeneous.
That is, if $y \in Y$ and $\lambda \in \R$, then
\[
J_q(\lambda y) = \abs{\lambda}^{q-1}J_q(y)\;.
\]

\begin{lemma}
  Let $q > 1$ and denote by $\mathcal{S}_q\colon Y \to [0,+\infty]$ the mapping
  \[
  \mathcal{S}_q(y) = \frac{1}{q}\norm{y}^q\;.
  \]
  Then
  \[
  J_q(y) = \partial\mathcal{S}_q(y)\;.
  \]
\end{lemma}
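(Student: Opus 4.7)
The plan is to prove the two inclusions $J_q(y)\subseteq \partial\mathcal{S}_q(y)$ and $\partial\mathcal{S}_q(y)\subseteq J_q(y)$ by reducing everything to the Fenchel--Young inequality for the pair of exponents $q$ and $q_*$.

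First I would record the two elementary ingredients. On the one hand, for any $\omega \in Y^*$ and $y \in Y$ one has the trivial pairing bound $\inner{\omega}{y}\le \norm{\omega}\norm{y}$. On the other hand, for real numbers $a,b\ge 0$ one has Young's inequality
\[
ab \le \frac{a^q}{q} + \frac{b^{q_*}}{q_*},
\]
with equality if and only if $a^q = b^{q_*}$. Applied with $a = \norm{\tilde{y}}$ and $b = \norm{\omega}$ and combined with the pairing bound, this gives the general estimate
\[
\inner{\omega}{\tilde{y}} \le \frac{\norm{\tilde{y}}^q}{q} + \frac{\norm{\omega}^{q_*}}{q_*}.
\]

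For the inclusion $J_q(y)\subseteq \partial\mathcal{S}_q(y)$, I would take $\omega\in J_q(y)$ and an arbitrary $\tilde{y}\in Y$, and show that $\mathcal{S}_q(\tilde{y})-\mathcal{S}_q(y)-\inner{\omega}{\tilde{y}-y}\ge 0$. Using the defining properties $\inner{\omega}{y}=\norm{\omega}\norm{y}$ and $\norm{\omega}=\norm{y}^{q-1}$ (so that $\norm{\omega}^{q_*}=\norm{y}^q$), the last term simplifies to $\norm{y}^q$, and the first two combine via the displayed Young estimate applied to $\tilde{y}$ into exactly $\norm{y}^q/q+\norm{y}^q/q_*=\norm{y}^q$; the inequality then falls out.

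For the converse inclusion, I would start from $\omega\in\partial\mathcal{S}_q(y)$ and test the subgradient inequality against carefully chosen $\tilde{y}$. Testing with $\tilde{y}=ty$ and letting $t\to 1$ from above and below (or, equivalently, using that the Fenchel conjugate of $\mathcal{S}_q$ is $\mathcal{S}_{q_*}$ and that $\omega\in\partial\mathcal{S}_q(y)$ is equivalent to the Fenchel--Young equality $\mathcal{S}_q(y)+\mathcal{S}_{q_*}(\omega)=\inner{\omega}{y}$) yields $\inner{\omega}{y} = \norm{y}^q/q + \norm{\omega}^{q_*}/q_*$. Since this saturates both the pairing bound and Young's inequality simultaneously, one reads off $\inner{\omega}{y}=\norm{\omega}\norm{y}$ and $\norm{\omega}^{q_*}=\norm{y}^q$, i.e.\ $\norm{\omega}=\norm{y}^{q-1}$, which is exactly the definition of $\omega\in J_q(y)$.

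The only mildly delicate point is the computation of $\mathcal{S}_q^*$; I would either quote it or derive it in one line from the same Young estimate, noting that equality is attained for any $\omega$ by choosing $y$ in the appropriate direction. The special case $y=0$ is handled separately but trivially: both $J_q(0)$ and $\partial\mathcal{S}_q(0)$ reduce to $\{0\}$ when interpreted literally, respectively to all $\omega$ with $\norm{\omega}=0$.
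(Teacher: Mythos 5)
Your proof is correct, and it is necessarily a different route from the paper's, because the paper does not prove this lemma at all --- it simply cites \cite[Chap.~2, Cor.~3.5]{Cio90}. What you give is the standard self-contained argument (essentially the one behind that reference): the inclusion $J_q(y)\subseteq\partial\mathcal{S}_q(y)$ via the scalar Young inequality, and the converse via saturation of both the pairing bound and Young's inequality in the Fenchel--Young equality $\mathcal{S}_q(y)+\tfrac{1}{q_*}\norm{\omega}^{q_*}=\inner{\omega}{y}$. The first inclusion checks out exactly as you describe, and the treatment of $y=0$ is fine. One small imprecision in the converse direction: testing the subgradient inequality only along the ray $\tilde{y}=ty$ and letting $t\to 1^\pm$ yields $\inner{\omega}{y}=\norm{y}^q$, which is \emph{not} by itself the Fenchel--Young equality $\inner{\omega}{y}=\tfrac{1}{q}\norm{y}^q+\tfrac{1}{q_*}\norm{\omega}^{q_*}$ --- for that you also need the upper bound $\norm{\omega}^{q_*}\le\norm{y}^q$, which requires testing against general $\tilde{y}$ (i.e.\ the computation of $\mathcal{S}_q^*$, the second alternative you offer in the parenthesis). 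So the two options you present as ``equivalent'' are not interchangeable as stated; the Fenchel-conjugate route is the one that closes the argument, and with it the proof is complete. What your approach buys over the paper's is self-containedness at the cost of a few lines; the paper's citation buys brevity and defers the (routine) computation of $\mathcal{S}_q^*$ to the literature.
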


\begin{proof}
  See \cite[Chap.~2, Cor.~3.5]{Cio90}.
\end{proof}

Recall that the Banach space $Y$ is \emph{smooth}
(see~\cite{LinTza79}),
if for every $y \in Y$ satisfying $\norm{y} = 1$ there exists a unique $\omega \in Y^*$
such that $\norm{\omega} = \inner{\omega}{y} = 1$.
Equivalently, $Y$ is smooth, if and only if every duality
mapping $J_q$ on $Y$ with $q > 1$ is single valued.
In this case, we regard $J_q$ as a mapping from $Y$ to $Y^*$.
This allows us to formulate the Bregman distance with respect to $\mathcal{S}_q$
in terms of the duality mapping $J_q$.

\begin{definition}
  Assume that $Y$ is a smooth Banach space and let $q > 1$.
  For $y \in Y$ we define the Bregman distance with respect to $\mathcal{S}_q$
  at $y$ by
  \[
  \mathcal{D}_q(\tilde{y};y)
  := \frac{1}{q}\norm{\tilde{y}}^q - \frac{1}{q}\norm{y}^q - \inner{J_q(y)}{\tilde{y}-y}\;.
  \]
  Moreover we define the symmetric Bregman distance with respect to $\mathcal{S}_q$ as
  \[
  \mathcal{D}_q^{\rm sym}(\tilde{y},y)
  := \inner{J_q(\tilde{y})-J_q(y)}{\tilde{y}-y}\;.
  \]  
\end{definition}

The following definition is taken from~\cite{LinTza79}.

\begin{definition}
  The modulus of convexity $\delta_Y \colon (0,2] \to [0,1]$ is defined by
  \[
  \delta_Y(\eps) := \inf\set{1-\norm{y-\tilde{y}}/2}{y,\,\tilde{y} \in Y,\ \norm{y}=\norm{\tilde{y}}=1,\ \norm{y-\tilde{y}}=\eps}\;.
  \]
  The modulus of smoothness $\rho_Y\colon (0,+\infty) \to (0,+\infty)$ is defined by
  \[
  \rho_Y(\tau) = \sup\set{(\norm{y+\tilde{y}} + \norm{y-\tilde{y}})/2-1}{y,\,\tilde{y} \in Y,\ \norm{y}=1,\ \norm{\tilde{y}}=\tau}\;.
  \]
  The space $Y$ is called \emph{uniformly convex} if $\delta_Y(\eps) > 0$ for every $\eps > 0$.
  It is called \emph{uniformly smooth} if $\rho_Y(\tau) = o(\tau)$ as $\tau \to 0$.

  The space $Y$ is called \emph{$q$-convex} (or convex of power type $q$), if 
  there exists a constant $K > 0$ such that $\delta_Y(\eps) \ge K\eps^q$ for all $\eps$.
  Similarly, it is called \emph{$q$-smooth} (or smooth of power type $q$), if
  $\rho_Y(\tau) \le K\tau^q$ for all sufficiently small $\tau > 0$.
\end{definition}

One can show that the space $Y$ is $q$-convex, if and only if $Y^*$ is $q_*$-smooth
(in fact, the relation $2\rho_{Y^*}=(2\delta_Y)^*$ holds, see~\cite[Prop.~1.e.2]{LinTza79}).

For us, the most important property of $q$-convex spaces
is that they allow to estimate the $q$-th power of the norm from above
by the Bregman distance.
More precisely, the following result holds:

\begin{lemma}\label{le:qconvex}
  Assume that the Banach space $Y$ is $q$-convex.
  Then there exists a constant $C > 0$ such that
  \[
  C\norm{y-\tilde{y}}^q \le \mathcal{D}_q(\tilde{y};y)\;.
  \]
  for all $y$, $\tilde{y} \in Y$.
\end{lemma}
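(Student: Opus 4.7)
The plan is to first show that the functional $\mathcal{S}_q$ is uniformly convex of power $q$, in the sense
\[
\mathcal{S}_q((1-t)y + t\tilde{y}) \le (1-t)\mathcal{S}_q(y) + t\mathcal{S}_q(\tilde{y}) - c\,t(1-t)\norm{y-\tilde{y}}^q
\]
for all $y, \tilde{y} \in Y$ and $t \in [0,1]$, and then to translate this into the desired Bregman lower bound by a standard subgradient argument.

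For the uniform convexity estimate, I would first handle the midpoint case ($t = 1/2$) with $\norm{y} = \norm{\tilde{y}} = r > 0$: after normalising to the unit sphere, the $q$-convexity hypothesis $\delta_Y(\eps) \ge K\eps^q$ yields $\norm{(y+\tilde{y})/(2r)} \le 1 - K(\norm{y-\tilde{y}}/r)^q$, and raising this inequality to the $q$-th power (using $(1-s)^q \le 1-s$ for $s \in [0,1]$ and $q \ge 1$, which holds since $s \mapsto (1-s)^q$ is convex with the right endpoints) produces the midpoint inequality for $\mathcal{S}_q$. The case of unequal norms is reduced to this by replacing the longer of the two vectors with a rescaled copy of the shorter one, the extra terms being controlled by plain convexity of $\mathcal{S}_q$. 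Upgrading from midpoint to arbitrary $t \in [0,1]$ is obtained by a dyadic iteration applied to $g(t) := \mathcal{S}_q((1-t)y + t\tilde{y})$, which inherits a midpoint modulus estimate of the required form and is continuous.

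To conclude, fix $y \in Y$ and choose $\xi \in \partial \mathcal{S}_q(y) = J_q(y)$. The subgradient inequality $\mathcal{S}_q((1-t)y + t\tilde{y}) \ge \mathcal{S}_q(y) + t\inner{\xi}{\tilde{y}-y}$ combined with the uniform convexity estimate yields
\[
t\inner{\xi}{\tilde{y}-y} \le t\bigl(\mathcal{S}_q(\tilde{y}) - \mathcal{S}_q(y)\bigr) - c\,t(1-t)\norm{y-\tilde{y}}^q\,.
\]
Dividing by $t > 0$ and letting $t \to 0^+$ gives $\mathcal{D}_q(\tilde{y};y) \ge c\norm{y-\tilde{y}}^q$, which is the required bound with $C = c$.

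The main obstacle is the uniform convexity estimate, particularly the unequal-norm case: since the modulus of convexity is only defined for vectors on a common sphere, some careful radial bookkeeping is needed to preserve the sharp $q$-th-power dependence on $\norm{y-\tilde{y}}$. An alternative route that bypasses this step entirely is to appeal to the Xu--Roach characteristic inequality for $q$-convex Banach spaces, which delivers the lemma in one line.
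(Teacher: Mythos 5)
The paper offers no argument of its own here: the proof is a one-line citation of \cite[Lemma~2.7]{BonKazMaaSchoSchu08}, which in turn rests on the Xu--Roach characteristic inequalities that you mention as your fallback. Your proposal is therefore a genuinely different, self-contained route, and its overall architecture is sound. The endgame is correct and clean: from $\xi\in\partial\mathcal{S}_q(y)$ and the power-type convexity estimate one gets $\inner{\xi}{\tilde{y}-y}\le \mathcal{S}_q(\tilde{y})-\mathcal{S}_q(y)-c\norm{y-\tilde{y}}^q$ by dividing by $t$ and letting $t\to 0^+$, which is exactly the claimed Bregman bound. In fact you need less than the full upgrade to arbitrary $t$: setting $g(t)=\mathcal{S}_q\bigl((1-t)y+t\tilde{y}\bigr)$ and iterating the midpoint inequality along $t=2^{-n}$ gives directly
\[
g'(0^+)\le g(1)-g(0)-\frac{2c}{1-2^{1-q}}\norm{y-\tilde{y}}^q\,,
\]
and $g'(0^+)\ge\inner{\xi}{\tilde{y}-y}$ by the subgradient inequality, so the dyadic bookkeeping for general $\lambda$ can be skipped. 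The equal-norm midpoint step is also fine (note that $K\eps^q\le\delta_Y(\eps)\le 1$, so $(1-s)^q\le 1-s$ applies).

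The one place where the sketch undersells the difficulty is the unequal-norm reduction. ``Plain convexity of $\mathcal{S}_q$'' gives no quantitative gain, and rescaling $\tilde{y}$ to the sphere of radius $r=\norm{y}$ costs $R-r$ (with $R=\norm{\tilde{y}}$) both in the midpoint norm and in the distance $\norm{y-\tilde{y}}$; if $R-r$ is comparable to $\norm{y-\tilde{y}}$ the equal-norm estimate yields nothing. The standard fix is a case distinction: if $R-r\ge\tfrac12\norm{y-\tilde{y}}$, the gain comes from the scalar inequality $\bigl(\tfrac{r+R}{2}\bigr)^q\le\tfrac12(r^q+R^q)-c_q(R-r)^q$ together with $\norm{(y+\tilde{y})/2}\le(r+R)/2$; this needs $q\ge 2$, which is automatic since no nontrivial space is $q$-convex for $q<2$. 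Otherwise the rescaled vector is still at distance $\ge\tfrac12\norm{y-\tilde{y}}$ from $y$, the equal-norm estimate applies, and the triangle-inequality error $(R-r)/2$ is absorbed via $(M-E)^q\le M^q-EM^{q-1}$ and $M^q\le\tfrac12(r^q+R^q)$. With this case split spelled out (and the trivial case $y=0$, where $\mathcal{D}_q(\tilde{y};0)=\tfrac1q\norm{\tilde{y}}^q$, handled separately) your argument closes; as it stands, that step is the only real gap, and you correctly flag it as the obstacle and offer the Xu--Roach inequality as a complete substitute.
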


\begin{proof}
  See~\cite[Lemma~2.7]{BonKazMaaSchoSchu08}.
\end{proof}

\section{Convergence Rates}

In the following we always assume that $x^\dagger$ satisfies a range condition with
\[
\xi^\dagger = A^*\omega^\dagger \in \range A^* \cap \partial\mathcal{R}(x^\dagger)\;.
\]
Then the definition of the dual $\mathcal{R}^*\colon X^* \to (-\infty,+\infty]$ of $\mathcal{R}$ implies that
\[
x^\dagger \in \partial\mathcal{R}^*(A^*\omega^\dagger)\;.
\]
Consequently we can define a Bregman distance 
$\mathcal{D}^*_{x^\dagger}(\cdot;A^*\omega^\dagger) \colon X^* \to [0,+\infty]$
for the dual function $\mathcal{R}^*$ as
\[
\mathcal{D}^*_{x^\dagger}(\xi;A^*\omega^\dagger) = \mathcal{R}^*(\xi) - \mathcal{R}^*(A^*\omega^\dagger) - \inner{\xi-A^*\omega^\dagger}{x^\dagger}\;.
\]

\begin{proposition}\label{pr:dual_Tik}
  Assume that $x_\alpha^\delta$ minimises $\mathcal{T}_\alpha(\cdot;y^\delta)$.
  Then there exists $\omega_\alpha^\delta \in Y^*$ such that
  \begin{equation}\label{eq:KKT}
    A^*\omega_\alpha^\delta \in \partial\mathcal{R}(x_\alpha^\delta)\,,
    \qquad\qquad
    -\alpha\omega_\alpha^\delta = J_p(Ax_\alpha^\delta - y^\delta)\;.
  \end{equation}
  Moreover $\omega_\alpha^\delta$ minimises the functional
  $\mathcal{T}^*_\alpha(\cdot;y^\delta) \colon Y^* \to (-\infty,+\infty]$ defined by
  \[
  \mathcal{T}^*_\alpha(\omega;y^\delta) 
  := \mathcal{D}^*_{x^\dagger}(A^*\omega;A^*\omega^\dagger) + \alpha^{p_*-1}\frac{1}{p_*}\norm{\omega}^{p_*}
  -\inner{\omega-\omega^\dagger}{y^\delta-y^\dagger}\;.
  \]
\end{proposition}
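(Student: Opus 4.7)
The plan is to split the proof into two parts: first derive the KKT conditions~\eqref{eq:KKT} from the primal optimality of $x_\alpha^\delta$, and then show that $\omega_\alpha^\delta$ constructed through those conditions is a critical point—hence, by convexity, a minimiser—of the dual functional $\mathcal{T}^*_\alpha(\cdot;y^\delta)$.

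For the first part I would apply standard convex subdifferential calculus to $\mathcal{T}_\alpha(x;y^\delta) = \alpha\mathcal{R}(x) + \mathcal{S}_p(Ax-y^\delta)$, using the previously recorded identity $\partial\mathcal{S}_p = J_p$ together with the chain rule for composition with the bounded linear operator $A$. Under the usual qualification hypotheses, primal optimality $0\in\partial\mathcal{T}_\alpha(x_\alpha^\delta;y^\delta)$ becomes $0\in\alpha\,\partial\mathcal{R}(x_\alpha^\delta) + A^*J_p(Ax_\alpha^\delta-y^\delta)$. Picking $\zeta\in J_p(Ax_\alpha^\delta-y^\delta)$ with $-A^*\zeta/\alpha\in\partial\mathcal{R}(x_\alpha^\delta)$ and setting $\omega_\alpha^\delta := -\zeta/\alpha$ yields both equations in~\eqref{eq:KKT}.

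For the second part, I would observe that $\mathcal{T}^*_\alpha(\cdot;y^\delta)$ is convex (composition of the convex $\mathcal{R}^*$ with the linear map $A^*$, plus $\alpha^{p_*-1}\mathcal{S}_{p_*}$, plus affine terms), so it suffices to show $0\in\partial\mathcal{T}^*_\alpha(\omega_\alpha^\delta;y^\delta)$. Since $\langle A^*\omega,x^\dagger\rangle=\langle\omega,Ax^\dagger\rangle=\langle\omega,y^\dagger\rangle$, differentiating term by term gives
\[
\partial\mathcal{T}^*_\alpha(\omega;y^\delta) = A\,\partial\mathcal{R}^*(A^*\omega) - y^\delta + \alpha^{p_*-1}J_{p_*}^*(\omega)\;.
\]
The Fenchel--Young equivalence $\xi\in\partial\mathcal{R}(x)\Leftrightarrow x\in\partial\mathcal{R}^*(\xi)$ turns $A^*\omega_\alpha^\delta\in\partial\mathcal{R}(x_\alpha^\delta)$ into $x_\alpha^\delta\in\partial\mathcal{R}^*(A^*\omega_\alpha^\delta)$, hence $Ax_\alpha^\delta\in A\,\partial\mathcal{R}^*(A^*\omega_\alpha^\delta)$. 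The identity $J_{p_*}^* = J_p^{-1}$ on reflexive $Y$, combined with the $(p_*-1)$-homogeneity and oddness of $J_{p_*}^*$, converts $-\alpha\omega_\alpha^\delta\in J_p(Ax_\alpha^\delta-y^\delta)$ into $-(Ax_\alpha^\delta-y^\delta)/\alpha^{p_*-1}\in J_{p_*}^*(\omega_\alpha^\delta)$. Adding the selected elements yields $Ax_\alpha^\delta - y^\delta + \alpha^{p_*-1}\cdot\bigl(-(Ax_\alpha^\delta-y^\delta)/\alpha^{p_*-1}\bigr) = 0$, so $0$ lies in the subdifferential and $\omega_\alpha^\delta$ is a minimiser.

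The main obstacle is really bookkeeping rather than hard analysis: one has to be careful with multi-valued duality mappings and with the sum/chain rules for subdifferentials, which require qualification conditions on $\mathcal{R}$ and $A$ (e.g.\ a point of continuity of $\mathcal{S}_p\circ(A\cdot-y^\delta)$ lying in $\mathop{\rm dom}\mathcal{R}$, which is automatic since $\mathcal{S}_p\circ A$ is finite everywhere). Once those are in place, the proof reduces to matching the two primal KKT relations against the two summands in $\partial\mathcal{T}^*_\alpha$ via the dualities $\mathcal{R}\leftrightarrow\mathcal{R}^*$ and $J_p\leftrightarrow J_{p_*}^*$.
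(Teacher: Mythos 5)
Your proposal is correct, but it reaches the conclusion by a genuinely different route than the paper. The paper's proof invokes the abstract Fenchel--Rockafellar duality theory of Ekeland and Temam twice as a black box: once to assert that the dual of $\mathcal{T}_\alpha(\cdot;y^\delta)/\alpha\to\min$ is $\frac{1}{\alpha p_*}\norm{\alpha\omega}^{p_*}-\inner{\omega}{y^\delta}+\mathcal{R}^*(A^*\omega)\to\min$, and once to obtain the extremality relations~\eqref{eq:KKT}; the only computation actually carried out there is the algebraic rewriting (using $Ax^\dagger=y^\dagger$ and adding constants) that identifies this dual objective with $\mathcal{T}^*_\alpha$ up to an additive constant. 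You instead derive~\eqref{eq:KKT} directly from $0\in\partial\mathcal{T}_\alpha(x_\alpha^\delta;y^\delta)$ via the sum and chain rules---legitimate here, since the data-fit term is convex and continuous on all of $X$, so the standard qualification condition holds---and then verify by hand that $0\in\partial\mathcal{T}^*_\alpha(\omega_\alpha^\delta;y^\delta)$ using Fenchel--Young and $J_{p_*}^*=J_p^{-1}$. A pleasant feature of your second step is that it needs only the trivial inclusions $\partial f(\omega)+\partial g(\omega)\subseteq\partial(f+g)(\omega)$ and $A\,\partial\mathcal{R}^*(A^*\omega)\subseteq\partial(\mathcal{R}^*\circ A^*)(\omega)$, so no qualification condition enters there; your argument is thus self-contained where the paper's is not, and it also treats the possible multivaluedness of $J_p$ explicitly, whereas the paper writes the second relation in~\eqref{eq:KKT} as an equality. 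The only caveats are minor and shared with the paper: the identification $J_{p_*}^*=J_p^{-1}$ requires $Y$ reflexive (harmless, since the spaces used later are $p$-smooth and hence reflexive), and subgradients of $\mathcal{R}^*$ a priori live in $X^{**}$, so one reads $x_\alpha^\delta\in\partial\mathcal{R}^*(A^*\omega_\alpha^\delta)$ through the canonical embedding. What the paper's approach buys is brevity and the immediate interpretation of $\mathcal{T}^*_\alpha$ as \emph{the} Fenchel dual; what yours buys is an elementary, verifiable proof that does not rely on the attainment and strong-duality statements of the cited theory.
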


\begin{proof}
  The dual of the problem 
  \[
  \mathcal{T}_\alpha(x)/\alpha = \frac{1}{\alpha p}\norm{Ax-y^\delta}^p + \mathcal{R}(x) \to \min
  \]
  is the problem 
  \[
  \frac{1}{\alpha p_*}\norm{\alpha\omega}^{p_*} - \inner{\omega}{y^\delta} + \mathcal{R}^*(A^*\omega) \to \min
  \]
  (see~\cite[Chap.~III]{EkeTem74}).
  Writing
  \[
  -\inner{\omega}{y^\delta} = \inner{\omega}{y^\dagger-y^\delta}-\inner{\omega}{y^\dagger}
  = -\inner{\omega}{y^\delta-y^\dagger} - \inner{A^*\omega}{x^\dagger}
  \]
  and adding the
  constant terms $\inner{A^*\omega^\dagger}{x^\dagger}$,
  $\inner{\omega^\dagger}{y^\delta-y^\dagger}$,
  and $-\mathcal{R}^*(A^*\omega^\dagger)$,
  we obtain the problem
  \begin{multline*}
  \alpha^{p_*-1}\frac{1}{p_*}\norm{\omega}^{p_*} + \mathcal{R}^*(A^*\omega) 
  - \mathcal{R}^*(A^*\omega^\dagger)\\
  - \inner{A^*\omega-A^*\omega^\dagger}{x^\dagger}
  - \inner{\omega-\omega^\dagger}{y^\delta-y^\dagger}
  \to \min\,,
  \end{multline*}
  which is precisely the minimisation of the dual Tikhonov functional $\mathcal{T}^*_\alpha$.
  The relations~\eqref{eq:KKT} are nothing else than
  the Karush--Kuhn--Tucker conditions for the minimisation problem
  (cf.~\cite[Chap.~III]{EkeTem74}).
\end{proof}

\begin{remark}
  In particular, Proposition~\ref{pr:dual_Tik} implies that,
  if $x_\alpha^\delta$ minimises the primal Tikhonov functional with noisy
  data $y^\delta$, then $\omega_\alpha^\delta$ \emph{almost} minimises
  the dual Tikhonov functional $\mathcal{T}_\alpha^*(\cdot;y^\dagger)$
  with \emph{exact} data $y^\dagger$.
  More precisely, we have
  \[
  \mathcal{T}_\alpha^*(\omega_\alpha^\delta;y^\dagger)
  \le \inf\set{\mathcal{T}_\alpha^*(\omega;y^\dagger)}{\omega \in Y^*} + \delta\norm{\omega_\alpha^\delta-\omega^\dagger}\;.
  \]
\end{remark}

\begin{definition}
  An \emph{index function} is a strictly increasing, continuous
  and concave function $\Phi\colon [0,+\infty) \to [0,+\infty)$
  satisfying $\Phi(0) = 0$.
\end{definition}

\begin{theorem}\label{th:main}
  Assume that $Y$ is a $p$-smooth Banach space
  and that there exists an index function $\Phi$ such that
  \begin{equation}\label{eq:var_ineq}
  \inner{\omega-\omega^\dagger}{J_{p_*}(\omega^\dagger)}
  \le \Phi\bigl(\mathcal{D}_{x^\dagger}^*(A^*\omega;A^*\omega^\dagger)\bigr)
  \end{equation}
  for every $\omega \in Y^*$.
  Denote by $\Psi$ the conjugate of the convex mapping $t \mapsto \Phi^{-1}(t)$.
  Then there exists a constant $D$ only depending on $p$ and the Banach space $Y$ such that
  \[
  \mathcal{D}_{A^*\omega^\dagger}(x_\alpha^\delta;x^\dagger) \le \Psi(\alpha^{p_*-1}) + D\frac{\delta^p}{\alpha}\;.
  \]
\end{theorem}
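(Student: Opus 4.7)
The plan is to exploit Proposition~\ref{pr:dual_Tik}, which characterises $\omega_\alpha^\delta$ as the minimiser of the dual Tikhonov functional, together with a Fenchel-type identity that converts the primal Bregman distance $\mathcal{D}_{A^*\omega^\dagger}(x_\alpha^\delta;x^\dagger)$ into the dual one $\mathcal{D}^*_{x^\dagger}(A^*\omega_\alpha^\delta;A^*\omega^\dagger)$ appearing in the variational inequality~\eqref{eq:var_ineq}.  Since $A^*\omega_\alpha^\delta\in\partial\mathcal{R}(x_\alpha^\delta)$ by the first part of~\eqref{eq:KKT}, Fenchel's equality yields $\mathcal{D}^*_{x^\dagger}(A^*\omega_\alpha^\delta;A^*\omega^\dagger) = \mathcal{D}_{A^*\omega_\alpha^\delta}(x^\dagger;x_\alpha^\delta)$, so the symmetric Bregman identity gives
\[
\mathcal{D}_{A^*\omega^\dagger}(x_\alpha^\delta;x^\dagger) + \mathcal{D}^*_{x^\dagger}(A^*\omega_\alpha^\delta;A^*\omega^\dagger) = \inner{\omega_\alpha^\delta-\omega^\dagger}{Ax_\alpha^\delta-y^\dagger}.
\]

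Next I would substitute the second KKT relation from~\eqref{eq:KKT}; by reflexivity of $Y$ and the $(p_*-1)$-homogeneity of $J_{p_*}$, it rewrites as $Ax_\alpha^\delta - y^\delta = -\alpha^{p_*-1}J_{p_*}(\omega_\alpha^\delta)$.  Splitting $Ax_\alpha^\delta-y^\dagger = (Ax_\alpha^\delta-y^\delta) + (y^\delta-y^\dagger)$ and expanding $\inner{\omega_\alpha^\delta-\omega^\dagger}{J_{p_*}(\omega_\alpha^\delta)} = \mathcal{D}_{p_*}^{\mathrm{sym}}(\omega_\alpha^\delta,\omega^\dagger) + \inner{\omega_\alpha^\delta-\omega^\dagger}{J_{p_*}(\omega^\dagger)}$ produces the master equation
\begin{multline*}
\mathcal{D}_{A^*\omega^\dagger}(x_\alpha^\delta;x^\dagger) + \mathcal{D}^*_{x^\dagger}(A^*\omega_\alpha^\delta;A^*\omega^\dagger) + \alpha^{p_*-1}\mathcal{D}_{p_*}^{\mathrm{sym}}(\omega_\alpha^\delta,\omega^\dagger)\\
= \inner{\omega_\alpha^\delta-\omega^\dagger}{y^\delta-y^\dagger} - \alpha^{p_*-1}\inner{\omega_\alpha^\delta-\omega^\dagger}{J_{p_*}(\omega^\dagger)}.
\end{multline*}

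The final step is to bound the right-hand side.  For the noise term, Young's inequality with scaling tuned to $\alpha^{1/p}$ gives $\inner{\omega_\alpha^\delta-\omega^\dagger}{y^\delta-y^\dagger} \le \delta\norm{\omega_\alpha^\delta-\omega^\dagger} \le D_1\delta^p/\alpha + \tfrac{1}{2}C\alpha^{p_*-1}\norm{\omega_\alpha^\delta-\omega^\dagger}^{p_*}$; applying Lemma~\ref{le:qconvex} in $Y^*$ (which is $p_*$-convex because $Y$ is $p$-smooth) gives $C\norm{\omega_\alpha^\delta-\omega^\dagger}^{p_*} \le \mathcal{D}_{p_*}(\omega_\alpha^\delta;\omega^\dagger) \le \mathcal{D}_{p_*}^{\mathrm{sym}}(\omega_\alpha^\delta,\omega^\dagger)$, so one half of the corresponding term on the left absorbs the norm summand.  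For the source term, the variational inequality~\eqref{eq:var_ineq} (interpreted two-sidedly, see below) yields $-\alpha^{p_*-1}\inner{\omega_\alpha^\delta-\omega^\dagger}{J_{p_*}(\omega^\dagger)} \le \alpha^{p_*-1}\Phi\bigl(\mathcal{D}^*_{x^\dagger}(A^*\omega_\alpha^\delta;A^*\omega^\dagger)\bigr)$, and the Fenchel--Young inequality for the convex conjugate pair $(\Phi^{-1},\Psi)$ gives $\alpha^{p_*-1}\Phi(t) \le t + \Psi(\alpha^{p_*-1})$; applied with $t = \mathcal{D}^*_{x^\dagger}(A^*\omega_\alpha^\delta;A^*\omega^\dagger)$, this cancels the dual Bregman distance on the left against the $t$ on the right, leaving exactly $\mathcal{D}_{A^*\omega^\dagger}(x_\alpha^\delta;x^\dagger) \le \Psi(\alpha^{p_*-1}) + D\delta^p/\alpha$.

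The main obstacle is the direction of~\eqref{eq:var_ineq}: the proof needs an upper bound on $-\inner{\omega_\alpha^\delta-\omega^\dagger}{J_{p_*}(\omega^\dagger)}$, whereas~\eqref{eq:var_ineq} as stated only bounds this quantity from below.  The natural remedy is to interpret~\eqref{eq:var_ineq} as a bound on $|\inner{\omega-\omega^\dagger}{J_{p_*}(\omega^\dagger)}|$, which is justified either by symmetrising $\Phi$ or by applying~\eqref{eq:var_ineq} after the substitution $\omega\mapsto 2\omega^\dagger-\omega$ and accepting that $\Phi$ controls the dual Bregman distance symmetrically around $A^*\omega^\dagger$.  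Everything else is routine bookkeeping, with the constant $D$ depending only on $p$ and on the $p_*$-convexity constant $C$ from Lemma~\ref{le:qconvex}.
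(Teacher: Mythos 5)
Your argument is essentially the paper's own proof: the same dual KKT substitution $Ax_\alpha^\delta-y^\delta=-\alpha^{p_*-1}J_{p_*}(\omega_\alpha^\delta)$, the same splitting of $\inner{\omega_\alpha^\delta-\omega^\dagger}{J_{p_*}(\omega_\alpha^\delta)}$ into the source term plus $\mathcal{D}_{p_*}^{\rm sym}(\omega_\alpha^\delta,\omega^\dagger)$, the same Fenchel identity converting $\mathcal{D}_{A^*\omega_\alpha^\delta}(x^\dagger;x_\alpha^\delta)$ into $\mathcal{D}^*_{x^\dagger}(A^*\omega_\alpha^\delta;A^*\omega^\dagger)$, and the same two applications of Young's inequality (for the pair $(\Phi^{-1},\Psi)$ and for the noise term against the $p_*$-convexity of $Y^*$). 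Your remark on the sign of \eqref{eq:var_ineq} is well taken: the proof needs an upper bound on $\inner{\omega^\dagger-\omega_\alpha^\delta}{J_{p_*}(\omega^\dagger)}$ while the hypothesis as printed bounds the opposite pairing, and the paper silently applies it with the sign reversed, so the condition should be stated with $\omega^\dagger-\omega$ (or with an absolute value) --- with that reading both your proof and the paper's are complete, whereas your alternative fix via $\omega\mapsto 2\omega^\dagger-\omega$ would require an unwarranted symmetry of $\mathcal{D}^*_{x^\dagger}(\cdot\,;A^*\omega^\dagger)$ and is better dropped.
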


\begin{proof}
  We have
  \[
  \begin{aligned}
    \mathcal{D}_{A^*\omega_\alpha^\delta,A^*\omega^\dagger}^{\rm sym}(x^\dagger,x_\alpha^\delta)
    &= \inner{A^*\omega_\alpha^\delta-A^*\omega^\dagger}{x_\alpha^\delta-x^\dagger}\\
    &= \inner{\omega_\alpha^\delta-\omega^\dagger}{Ax_\alpha^\delta-Ax^\dagger}\\
    &= \inner{\omega_\alpha^\delta-\omega^\dagger}{Ax_\alpha^\delta-y^\delta}
       +\inner{\omega_\alpha^\delta-\omega^\dagger}{y^\delta-y^\dagger}\;.
  \end{aligned}
  \]
  Proposition~\ref{pr:dual_Tik} implies that
  $-\alpha\omega_\alpha^\delta = J_p(Ax_\alpha^\delta-y^\delta)$,
  and therefore
  \[
  Ax_\alpha^\delta-y^\delta 
  = J_{p_*}(-\alpha\omega_\alpha^\delta)
  = -\alpha^{p_*-1}J_{p_*}(\omega_\alpha^\delta)\;.
  \]
  Consequently
  \[
  \begin{aligned}
    \mathcal{D}_{A^*\omega_\alpha^\delta,A^*\omega^\dagger}^{\rm sym}(x^\dagger,x_\alpha^\delta)
    &= -\alpha^{p_*-1}\inner{\omega_\alpha^\delta-\omega^\dagger}{J_{p_*}(\omega_\alpha^\delta)}
       +\inner{\omega_\alpha^\delta-\omega^\dagger}{y^\delta-y^\dagger}\\
    &= -\alpha^{p_*-1}\inner{\omega_\alpha^\delta-\omega^\dagger}{J_{p_*}(\omega^\dagger)}
       -\alpha^{p_*-1}\mathcal{D}_{p_*}^{\rm sym}(\omega_\alpha^\delta,\omega^\dagger)\\
    &\qquad{} +\inner{\omega_\alpha^\delta-\omega^\dagger}{y^\delta-y^\dagger}
  \end{aligned}
  \]
  Using~\eqref{eq:var_ineq} and estimating 
  $\inner{\omega_\alpha^\delta-\omega^\dagger}{y^\delta-y^\dagger} \le \delta\norm{\omega_\alpha^\delta-\omega^\dagger}$,
  we obtain
  \begin{multline}\label{eq:main1}
    \mathcal{D}_{A^*\omega_\alpha^\delta,A^*\omega^\dagger}^{\rm sym}(x^\dagger,x_\alpha^\delta)\\
    \le \alpha^{p_*-1}\Phi\bigl(\mathcal{D}_{x^\dagger}^*(A^*\omega_\alpha^\delta;A^*\omega^\dagger)\bigr)
       -\alpha^{p_*-1}\mathcal{D}_{p_*}^{\rm sym}(\omega_\alpha^\delta,\omega^\dagger)
       + \delta\norm{\omega_\alpha^\delta-\omega^\dagger}\;.
  \end{multline}
  Now,
  \[
  \mathcal{D}_{A^*\omega_\alpha^\delta,A^*\omega^\dagger}^{\rm sym}(x^\dagger,x_\alpha^\delta)
  = \mathcal{D}_{A^*\omega^\dagger}(x_\alpha^\delta;x^\dagger) + \mathcal{D}_{A^*\omega_\alpha^\delta}(x^\dagger;x_\alpha^\delta)\,,
  \]
  and the assumptions $A^*\omega^\dagger\in\partial\mathcal{R}(x^\dagger)$
  and $A^*\omega_\alpha^\delta \in \partial\mathcal{R}(x_\alpha^\delta)$
  imply that
  \[
  \begin{aligned}
    \mathcal{D}_{A^*\omega_\alpha^\delta}(x^\dagger;x_\alpha^\delta)
    &= \mathcal{R}(x^\dagger)-\mathcal{R}(x_\alpha^\delta) - \inner{A^*\omega_\alpha^\delta}{x^\dagger-x_\alpha^\delta}\\
    &= \inner{A^*\omega^\dagger}{x^\dagger}-\mathcal{R}^*(A^*\omega^\dagger)
    - \inner{A^*\omega_\alpha^\delta}{x_\alpha^\delta} + \mathcal{R}^*(A^*\omega_\alpha^\delta)\\
    &\qquad {}-\inner{A^*\omega_\alpha^\delta}{x^\dagger-x_\alpha^\delta}\\
    &= \mathcal{R}^*(A^*\omega_\alpha^\delta)-\mathcal{R}^*(\omega^\dagger) - \inner{A^*(\omega_\alpha^\delta-\omega^\dagger)}{x^\dagger}\\
    &=  \mathcal{D}^*_{x^\dagger}(A^*\omega_\alpha^\delta;A^*\omega^\dagger)\;.
  \end{aligned}
  \]
  Because $Y$ is $p$-smooth, its dual space $Y^*$ is $p_*$-convex.
  Thus there exists $C > 0$ such that (see Lemma~\ref{le:qconvex})
  \[
  C\norm{\omega_\alpha^\delta-\omega^\dagger}^{p_*} \le \mathcal{D}_{p_*}^{\rm sym}(\omega_\alpha^\delta,\omega^\dagger)\;.
  \]
  Consequently~\eqref{eq:main1} implies that
  \begin{multline*}
  \mathcal{D}_{A^*\omega^\dagger}(x_\alpha^\delta;x^\dagger)
  \le \alpha^{p_*-1}\Phi\bigl(\mathcal{D}_{x^\dagger}^*(A^*\omega_\alpha^\delta;A^*\omega^\dagger)\bigr)
  - \mathcal{D}_{x^\dagger}^*(A^*\omega_\alpha^\delta;A^*\omega^\dagger)\\
  + \delta\norm{\omega_\alpha^\delta-\omega^\dagger} - C\alpha^{p_*-1}\norm{\omega_\alpha^\delta-\omega^\dagger}^{p_*}\;.
  \end{multline*}
  Applying Young's inequality (see~\cite[Thm.~13.2]{HewStr65}), we obtain
  \[
  \begin{aligned}
  \alpha^{p_*-1}\Phi\bigl(\mathcal{D}_{x^\dagger}^*(A^*\omega_\alpha^\delta;A^*\omega^\dagger)\bigr)
  &\le \Psi(\alpha^{p_*-1}) + \Psi^*\bigl(\Phi\bigl(\mathcal{D}_{x^\dagger}^*(A^*\omega_\alpha^\delta;A^*\omega^\dagger)\bigr)\bigr)\\
  &= \Psi(\alpha^{p_*-1}) + \mathcal{D}_{x^\dagger}^*(A^*\omega_\alpha^\delta;A^*\omega^\dagger)
  \end{aligned}
  \]
  and
  \[
  \delta\norm{\omega_\alpha^\delta-\omega^\dagger}
  \le C\alpha^{p_*-1}\norm{\omega_\alpha^\delta-\omega^\dagger}^{p_*}
  + \frac{1}{pp_*^{1/p_*}C^{p/p_*}} \frac{\delta^p}{\alpha}\;.
  \]
  Therefore
  \[
  \mathcal{D}_{A^*\omega^\dagger}(x_\alpha^\delta;x^\dagger) \le \Psi(\alpha^{p_*-1}) + \frac{1}{pp_*^{1/p_*}C^{p/p_*}} \frac{\delta^p}{\alpha}\,,
  \]
  which proves the assertion.
\end{proof}

\begin{corollary}
  Assume that the assumptions of Theorem~\ref{th:main} are satisfied.
  Then we have for \emph{exact data} $y^\dagger$ the estimate
  \[
  \mathcal{D}_{A^*\omega^\dagger}(x_\alpha;x^\dagger) \le \Psi(\alpha^{p_*-1})\;.
  \]
  In the special case where $\Phi(t) = c t^{1/q_*}$ for some $q_* > 1$ and $c > 0$ we have
  \[
  \mathcal{D}_{A^*\omega^\dagger}(x_\alpha;x^\dagger) \le \frac{c^{q}q_*^{1-q}}{q} \alpha^{(p_*-1)q}\;.
  \]
\end{corollary}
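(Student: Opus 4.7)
The first assertion is immediate from Theorem~\ref{th:main}. For exact data $y^\dagger$ we may take $y^\delta = y^\dagger$, so that $\delta = 0$ and $x_\alpha^\delta = x_\alpha$. The term $D\delta^p/\alpha$ in the conclusion of Theorem~\ref{th:main} then vanishes and we are left with $\mathcal{D}_{A^*\omega^\dagger}(x_\alpha;x^\dagger) \le \Psi(\alpha^{p_*-1})$.

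For the second assertion the task reduces to computing the convex conjugate of the power function $t \mapsto \Phi^{-1}(t) = (t/c)^{q_*}$ explicitly and then evaluating it at $\alpha^{p_*-1}$. The plan is to differentiate $s \mapsto \Psi(s) = \sup_{t \ge 0}\bigl(st - (t/c)^{q_*}\bigr)$ to find that the supremum is attained at $t = (sc^{q_*}/q_*)^{1/(q_*-1)}$, and then to substitute back.

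The only delicate point is the bookkeeping of exponents, which must be handled via the conjugacy relation $1/q + 1/q_* = 1$. The identities $q_*(q-1) = q$, $q-1 = 1/(q_*-1)$, and $1 - 1/q_* = 1/q$ should collapse the resulting expression into the stated closed form. A short verification shows
\[
\Psi(s) = s^q c^q\Bigl(\frac{1}{q_*^{q-1}} - \frac{1}{q_*^q}\Bigr)
= s^q c^q \cdot \frac{q_*-1}{q_*^q}
= \frac{c^q q_*^{1-q}}{q}\, s^q\,,
\]
where the last step uses $(q_*-1)/q_* = 1/q$. Plugging $s = \alpha^{p_*-1}$ into this formula and combining with the first assertion yields the claimed bound $\mathcal{D}_{A^*\omega^\dagger}(x_\alpha;x^\dagger) \le \frac{c^q q_*^{1-q}}{q}\alpha^{(p_*-1)q}$.

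No real obstacle arises: once one writes down the optimality condition for the Legendre transform of the power function, everything reduces to arithmetic with conjugate exponents. The only thing one has to be careful about is not confusing the two pairs of conjugates $(p,p_*)$ and $(q,q_*)$ that appear in the statement.
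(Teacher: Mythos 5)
Your proof is correct and follows essentially the same route as the paper: the first assertion is obtained by setting $\delta = 0$ in Theorem~\ref{th:main}, and the second by explicitly computing the conjugate of $\Phi^{-1}(t) = t^{q_*}/c^{q_*}$ (the paper leaves this as ``a short calculation''; your exponent bookkeeping, in particular the use of $q_*(q-1)=q$ and $(q_*-1)/q_* = 1/q$, checks out).
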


\begin{proof}
  In the general case, the assertion directly follows from Theorem~\ref{th:main}
  for $\delta = 0$.
  In the case $\Phi(t) = ct^{1/q_*}$, the mapping $\Psi$ is the conjugate
  of $\Phi^{-1}(t) = t^{q_*}/c^{q_*}$.
  A short calculation shows that $\Psi$ indeed has the form stated above.
\end{proof}

\begin{corollary}
  Assume that the assumptions of Theorem~\ref{th:main} are satisfied with
  an index function $\Phi(t) \sim t^{1/q_*}$ for some $q_* > 1$.
  Then we have for a parameter choice 
  \[
  \alpha \sim \delta^{\frac{p}{(p_*-1)q+1}}
  \]
  the convergence rate
  \[
  \mathcal{D}_{A^*\omega^\dagger}(x_\alpha^\delta;x^\dagger)
  = O\bigl(\delta^{\frac{p_*q}{(p_*-1)q+1}}\bigr)\;.
  \]
\end{corollary}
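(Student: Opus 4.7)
The plan is to combine Theorem~\ref{th:main} directly with the previous corollary and then optimise over $\alpha$. Theorem~\ref{th:main} already gives the abstract bound
\[
\mathcal{D}_{A^*\omega^\dagger}(x_\alpha^\delta;x^\dagger) \le \Psi(\alpha^{p_*-1}) + D\,\frac{\delta^p}{\alpha},
\]
and the preceding corollary has computed, for the special choice $\Phi(t)=ct^{1/q_*}$, the explicit bound $\Psi(\alpha^{p_*-1}) \le C_1\,\alpha^{(p_*-1)q}$ for a constant $C_1$ depending only on $c$ and $q$. So the first step is simply to invoke these two results and observe that the whole problem reduces to minimising an expression of the form $C_1\alpha^{(p_*-1)q} + D\delta^p/\alpha$ in $\alpha$.

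Before doing that, I would briefly justify that the qualitative assumption $\Phi(t)\sim t^{1/q_*}$ suffices: by definition of an index function we may replace $\Phi$ by $ct^{1/q_*}$ for a suitable $c$, which is itself an index function and dominates $\Phi$; the variational inequality~\eqref{eq:var_ineq} then still holds, so the previous corollary's computation of $\Psi$ applies.

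Next I balance the two terms. The term $\alpha^{(p_*-1)q}$ is increasing in $\alpha$ while $\delta^p/\alpha$ is decreasing, and they coincide (up to constants) precisely when $\alpha^{(p_*-1)q+1}\sim \delta^p$, i.e.\ $\alpha\sim\delta^{p/((p_*-1)q+1)}$, which is the parameter choice stated in the corollary. Substituting this $\alpha$ back yields that both terms are of order $\delta^{p(p_*-1)q/((p_*-1)q+1)}$. The final step is to use the conjugate relation $1/p+1/p_*=1$, which gives $p_*-1 = 1/(p-1)$ and hence $p(p_*-1)=p_*$, so the exponent simplifies to $p_*q/((p_*-1)q+1)$, matching the claim.

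There is no real obstacle here; the only slightly delicate point is the reduction from $\Phi(t)\sim t^{1/q_*}$ to an exact power, and everything else is the standard Young-type balancing argument applied to the bound from Theorem~\ref{th:main}. The constants in the $O(\cdot)$ depend on $p$, $c$, $q$ and the geometric constant $D$ of Theorem~\ref{th:main}, but not on $\delta$.
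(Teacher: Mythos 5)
Your proposal is correct and follows essentially the same route as the paper: both start from the bound $\Psi(\alpha^{p_*-1}) + D\delta^p/\alpha$ of Theorem~\ref{th:main}, identify $\Psi(t)\sim t^q$ as the conjugate of $\Phi^{-1}(t)\sim t^{q_*}$ (you via the preceding corollary's explicit formula, the paper by direct conjugation), balance the two terms to obtain the stated parameter choice, and simplify the exponent using $p(p_*-1)=p_*$. No gaps.
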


\begin{proof}
  From Theorem~\ref{th:main} we obtain the estimate
  \[
  \mathcal{D}_{A^*\omega^\dagger}(x_\alpha^\delta;x^\dagger) 
  \le \Psi(\alpha^{p_*-1}) + D\frac{\delta^p}{\alpha}\,,
  \]
  where $\Psi$ is the conjugate of the function $\Phi^{-1}$.
  Since $\Phi(t) \sim t^{1/q_*}$, it follows that $\Phi^{-1} \sim t^{q_*}$,
  and therefore $\Psi \sim t^q$.
  Thus
  \[
  \Psi(\alpha^{p_*-1}) + D \frac{\delta^p}{\alpha}
  \sim \alpha^{(p_*-1)q} + \frac{\delta^p}{\alpha}
  \sim \delta^{\frac{p(p_*-1)q}{(p_*-1)q+1}}+\delta^{p-\frac{p}{(p_*-1)q+1}}
  \sim \delta^{\frac{p(p_*-1)q}{(p_*-1)q+1}}\,,
  \]
  which proves the assertion,
  as $p(p_*-1) = p_*$.
\end{proof}

\section{Implications of Smoothness}

In this section, we first discuss the limitations of Theorem~\ref{th:main}
in the case where the dual of $\mathcal{R}$ is smooth.
Then the best possible rates imply that the dual variable
$\omega^\dagger$ satisfies the range condition $J_{p_*}(\omega^\dagger) \in \range A$.
Conversely, this range condition implies rates, if the functional
$\mathcal{R}$ itself is sufficiently smooth.

\begin{lemma}
  Assume that $\mathcal{R}^*$ is two times Fr\'echet differentiable at $A^*\omega^\dagger$
  and that there exists an index function $\Phi$ such that
  \begin{equation}\label{eq:smooth}
    \inner[b]{\omega-\omega^\dagger}{J_{p_*}(\omega^\dagger)}\le\Phi\bigl(\mathcal{D}_{x^\dagger}^*(A^*\omega;A^*\omega^\dagger)\bigr)
  \end{equation}
  for all $\omega \in Y^*$.
  If $\Phi(t) = o(t^{1/2})$ as $t \to 0$, then $x^\dagger$ minimises $\mathcal{R}$.

  In addition, if one can choose $\Phi(t) \sim t^{1/2}$ as $t \to 0$, then
  \[
  J_{p_*}(\omega^\dagger) \in \range A\;.
  \]
\end{lemma}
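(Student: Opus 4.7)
The plan is to exploit the twice Fréchet differentiability of $\mathcal{R}^*$ at $A^*\omega^\dagger$ to obtain a quadratic upper bound on $\mathcal{D}^*_{x^\dagger}(A^*\omega;A^*\omega^\dagger)$, and then test~\eqref{eq:smooth} along a small ray through $\omega^\dagger$. Since $x^\dagger\in\partial\mathcal{R}^*(A^*\omega^\dagger)$ and $\mathcal{R}^*$ is Fréchet differentiable at $A^*\omega^\dagger$, the subdifferential is single-valued with $x^\dagger$ equal to the Fréchet derivative, and the second-order Taylor expansion of $\mathcal{R}^*$ produces a constant $C>0$ and a neighbourhood of $A^*\omega^\dagger$ on which
\[
  \mathcal{D}^*_{x^\dagger}(\xi;A^*\omega^\dagger)\le C\norm{\xi-A^*\omega^\dagger}^2.
\]
Fixing $\eta\in Y^*$, substituting $\omega=\omega^\dagger+t\eta$ into~\eqref{eq:smooth}, dividing by $t$, and using monotonicity of $\Phi$ yields
\[
  \inner{\eta}{J_{p_*}(\omega^\dagger)}\le\frac{\Phi\bigl(Ct^2\norm{A^*\eta}^2\bigr)}{t}
\]
for all sufficiently small $t>0$.

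For the first claim, the hypothesis $\Phi(s)=o(s^{1/2})$ makes the right-hand side vanish in the limit $t\to 0^+$, so $\inner{\eta}{J_{p_*}(\omega^\dagger)}\le 0$; replacing $\eta$ by $-\eta$ upgrades this to equality, and since $\eta$ is arbitrary, $J_{p_*}(\omega^\dagger)=0$. The defining relation $\norm{J_{p_*}(\omega^\dagger)}=\norm{\omega^\dagger}^{p_*-1}$ then forces $\omega^\dagger=0$, whence $0=A^*\omega^\dagger\in\partial\mathcal{R}(x^\dagger)$, meaning $x^\dagger$ is a minimiser of $\mathcal{R}$.

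For the second claim, the hypothesis $\Phi(s)\le Ks^{1/2}$ near $0$ sharpens the estimate to $\inner{\eta}{J_{p_*}(\omega^\dagger)}\le K\sqrt{C}\,\norm{A^*\eta}$, and by the $\eta\leftrightarrow -\eta$ symmetry $\abs{\inner{\eta}{J_{p_*}(\omega^\dagger)}}\le K\sqrt{C}\,\norm{A^*\eta}$ for every $\eta\in Y^*$. The linear form $A^*\eta\mapsto\inner{\eta}{J_{p_*}(\omega^\dagger)}$ is therefore well-defined and bounded on $\range A^*\subseteq X^*$, so Hahn--Banach delivers an element $x\in X^{**}$ with $\inner{A^*\eta}{x}=\inner{\eta}{J_{p_*}(\omega^\dagger)}$ for all $\eta\in Y^*$; equivalently, $A^{**}x=J_{p_*}(\omega^\dagger)$ in $Y^{**}=Y$ (using reflexivity of $Y$). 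Under the reflexivity of $X$ implicit in the duality framework of Proposition~\ref{pr:dual_Tik}, $x\in X$ and $Ax=J_{p_*}(\omega^\dagger)$. The step I expect to require the most care is precisely this last Hahn--Banach-plus-reflexivity identification, i.e.\ ensuring that the extension actually lives in $X$ and not merely in $X^{**}$; the Taylor expansion and the variational-inequality linearisation are, by contrast, essentially automatic.
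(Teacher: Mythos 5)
Your proof is correct and follows essentially the same route as the paper: the second-order Taylor expansion of $\mathcal{R}^*$ at $A^*\omega^\dagger$ gives the local quadratic bound $\mathcal{D}^*_{x^\dagger}(A^*\omega;A^*\omega^\dagger)\le C\norm{A^*(\omega-\omega^\dagger)}^2$, and testing~\eqref{eq:smooth} along rays through $\omega^\dagger$ linearises the inequality, yielding $J_{p_*}(\omega^\dagger)=0$ in the $o(t^{1/2})$ case and $\abs{\inner{\eta}{J_{p_*}(\omega^\dagger)}}\le c\norm{A^*\eta}$ in the $\sim t^{1/2}$ case, exactly as in the paper. The only difference is at the very end: where the paper concludes $J_{p_*}(\omega^\dagger)\in\range A$ by citing Lemma~8.21 of~\cite{SchGraGroHalLen09}, you inline the standard Hahn--Banach-plus-reflexivity proof of that lemma, correctly identifying reflexivity of $X$ (implicit in the paper's duality framework) as the point requiring care.
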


\begin{proof}
  Because $\mathcal{R}^*$ is two times Fr\'echet differentiable at $A^*\omega^\dagger$,
  a Taylor expansion of $\mathcal{R}^*$ around $A^*\omega^\dagger$ shows that
  \begin{multline*}
  \mathcal{D}^*_{x^\dagger}(A^*\omega;A^*\omega^\dagger)
  = \mathcal{R}^*(A^*\omega) - \mathcal{R^*}(A^*\omega^\dagger) - \inner[b]{x^\dagger}{A^*(\omega-\omega^\dagger)}\\
  = (\mathcal{R}^*)''(A^*\omega^\dagger)\bigl(A^*(\omega-\omega^\dagger);A^*(\omega-\omega^\dagger)\bigr) + o(\norm{A^*(\omega-\omega^\dagger)}^2)\,,
  \end{multline*}
  where $(\mathcal{R}^*)''(A^*\omega^\dagger) \in B(X^*)$ denotes the second order
  derivative of $\mathcal{R}^*$ at $A^*\omega^\dagger$, which is a symmetric, bounded bilinear form on $X^*$.
  Writing $\tilde{\omega} := \omega-\omega^\dagger$,
  the inequality~\eqref{eq:smooth} implies that
  \[
  \inner[b]{\tilde{\omega}}{J_{p_*}(\omega^\dagger)}
  \le \Phi\bigl((\mathcal{R}^*)''(A^*\omega^\dagger)(A^*\tilde{\omega};A^*\tilde{\omega})+o(\norm{A^*\tilde{\omega}}^2)\bigr)
  \]
  for all $\tilde{\omega} \in X^*$.
  In addition, one can estimate
  \[
  (\mathcal{R}^*)''(A^*\tilde{\omega};A^*\tilde{\omega}) \le \norm{(\mathcal{R}^*)''(A^*\omega^\dagger)} \norm{A^*\tilde{\omega}}^2\;.
  \]
  Therefore,
  \begin{equation}\label{eq:smooth1}
    \inner{\tilde{\omega}}{J_{p_*}(\omega^\dagger)}
    \le \Phi\bigl(\norm{(\mathcal{R}^*)''(A^*\omega^\dagger)} \norm{A^*\tilde{\omega}}^2+o(\norm{A^*\tilde{\omega}}^2)\bigr)\;.
  \end{equation}
  
  Now assume that $\Phi(t) = o(t^{1/2})$ as $t \to 0$.
  Then, dividing~\eqref{eq:smooth1} by $\norm{A^*\tilde{\omega}}$ and considering
  the limit $\norm{A^*\tilde{\omega}} \to 0$, we see that $J_{p_*}(\omega^\dagger) = 0$,
  which is equivalent to stating that $\omega^\dagger = 0$.
  Thus $0 = A^*\omega^\dagger \in \partial\mathcal{R}(x^\dagger)$,
  which proves that $x^\dagger$ minimises $\mathcal{R}$.

  Now assume that $\Phi(t) \sim t^{1/2}$ as $t \to 0$.
  Then~\eqref{eq:smooth1} implies that there exists a constant $C > 0$ such that
  \[
  \inner[b]{\tilde{\omega}}{J_{p_*}(\omega^\dagger)}
  \le C \bigl(\norm{(\mathcal{R}^*)''(A^*\omega^\dagger)}\bigr)^{1/2}\norm{A^*\tilde{\omega}}
  \]
  for $\norm{A^*\tilde{\omega}}$ sufficiently small---and thus everywhere,
  as the right hand side is positively homogeneous and the left hand side is linear.
  Thus~\cite[Lemma~8.21]{SchGraGroHalLen09} implies that $J_{p_*}(\omega^\dagger) \in \range A$.
\end{proof}

In the special case, where $X$ is a $q$-convex Banach space and
\[
\mathcal{R}(x) = \frac{1}{q}\norm{x}^q\,,
\]
the above result can be slightly refined.
In this case, the condition of Theorem~\ref{th:main} reads as
\[
\inner[b]{\omega-\omega^\dagger}{J_{p_*}(\omega^\dagger)} \le \Phi\bigl(\mathcal{D}_{q_*}(A^*\omega;A^*\omega^\dagger)\bigr)
\]
and the resulting estimate is
\[
\mathcal{D}_q(x_\alpha^\delta;x^\dagger) \le \Psi(\alpha^{p_*-1}) + D\frac{\delta^p}{\alpha}\;.
\]

\begin{lemma}\label{le:qlimit}
  Assume that $X$ is $q$-convex and
  \[
  \mathcal{R}(x) = \frac{1}{q}\norm{x}^q\,,
  \]
  and that
  \[
  \inner[b]{\omega-\omega^\dagger}{J_{p_*}(\omega^\dagger)}
  \le \Phi\bigl(\mathcal{D}_{q_*}(A^*\omega;A^*\omega^\dagger)\bigr)
  \]
  for some index function $\Phi$.
  If
  \[
  \Phi(t) = o(t^{1/{q_*}})
  \qquad\qquad
  \text{ as } q \to 0\,,
  \]
  then $x^\dagger = 0$.
\end{lemma}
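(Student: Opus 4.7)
The plan is to mimic the proof of the preceding lemma, but to replace its Taylor expansion of $\mathcal{R}^*$ at $A^*\omega^\dagger$ by a direct upper bound on $\mathcal{D}_{q_*}$ coming from the $q$-convexity of $X$. Since $X$ is $q$-convex, the dual space $X^*$ is $q_*$-smooth, and the dual counterpart of Lemma~\ref{le:qconvex} yields a constant $C > 0$ with
\[
\mathcal{D}_{q_*}(\xi;\tilde{\xi}) \le C \|\xi-\tilde{\xi}\|^{q_*}
\]
for all $\xi, \tilde{\xi} \in X^*$. Inserting $\xi = A^*\omega$ and $\tilde{\xi} = A^*\omega^\dagger$ into the standing assumption therefore upgrades it to
\[
\langle \omega-\omega^\dagger, J_{p_*}(\omega^\dagger)\rangle
\le \Phi\bigl(C\|A^*(\omega-\omega^\dagger)\|^{q_*}\bigr)
\qquad \text{for all } \omega \in Y^*.
\]

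Next I would test this inequality at $\omega = \omega^\dagger + t\tilde{\omega}$ for arbitrary $\tilde{\omega} \in Y^*$ and $t > 0$ and divide by $t$, obtaining
\[
\langle \tilde{\omega}, J_{p_*}(\omega^\dagger)\rangle
\le \frac{\Phi\bigl(C t^{q_*} \|A^*\tilde{\omega}\|^{q_*}\bigr)}{t}.
\]
The hypothesis $\Phi(s) = o(s^{1/q_*})$ as $s \to 0$ forces the right-hand side to tend to $0$ as $t \to 0^+$, so $\langle \tilde{\omega}, J_{p_*}(\omega^\dagger)\rangle \le 0$. Since $\tilde{\omega}$ ranges over all of $Y^*$, this implies $J_{p_*}(\omega^\dagger) = 0$, i.e.\ $\omega^\dagger = 0$.

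To conclude, $\omega^\dagger = 0$ gives $A^*\omega^\dagger = 0 \in \partial\mathcal{R}(x^\dagger)$, and for $\mathcal{R}(x) = \|x\|^q/q$ the subdifferential $\partial\mathcal{R}(x^\dagger)$ coincides with the duality mapping on $X$ at $x^\dagger$. By the defining property of the duality mapping, every element $\xi$ of this set satisfies $\|\xi\| = \|x^\dagger\|^{q-1}$; taking $\xi = 0$ forces $\|x^\dagger\| = 0$, hence $x^\dagger = 0$. The only non-routine step I anticipate is the very first one: one has to produce or cite the upper bound on the Bregman distance of $\tfrac{1}{q_*}\|\cdot\|^{q_*}$ in $q_*$-smooth spaces, which is the dual companion of Lemma~\ref{le:qconvex} (and is standard in the Banach space regularisation literature, e.g.\ the reference cited there). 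Once that bound is in hand, the rest is a straightforward perturbation argument analogous to the proof of the previous lemma.
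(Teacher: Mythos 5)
Your proposal is correct and follows essentially the same route as the paper: bound $\mathcal{D}_{q_*}(A^*\omega;A^*\omega^\dagger)$ from above by a constant times $\norm{A^*(\omega-\omega^\dagger)}^{q_*}$ using the $q_*$-smoothness of $X^*$, divide the resulting inequality by the (linear) scale of $\omega-\omega^\dagger$, let it tend to zero to conclude $J_{p_*}(\omega^\dagger)=0$ and hence $\omega^\dagger=0$, and finally read off $x^\dagger=0$ from $0\in J_q(x^\dagger)$. The only cosmetic difference is that you parametrise along rays $\omega^\dagger+t\tilde{\omega}$ while the paper divides by $\norm{\tilde{\omega}}$ directly, and you spell out the smooth-space upper bound on the Bregman distance that the paper invokes implicitly.
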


\begin{proof}
  We have
  \[
  \Phi\bigl(\mathcal{D}_{q_*}(A^*\omega;A^*\omega^\dagger)\bigr)
  \le \Phi\bigl(\hat{C}_{q_*}\norm{A^*(\omega-\omega^\dagger)}^{q_*}\bigr)\;.
  \]
  Thus the assumption $\Phi(t) = o(t^{1/{q_*}})$ implies that,
  writing $\tilde{\omega} := \omega-\omega^\dagger$,
  \[
  \frac{\inner[b]{\tilde{\omega}}{J_{p_*}(\omega^\dagger)}}{\norm{\tilde{\omega}}}
  \le \frac{ \Phi\bigl(\hat{C}_{q_*}\norm{A^*\tilde{\omega}}^{q_*}\bigr)}{\norm{\tilde{\omega}}}
  \to 0
  \qquad
  \text{ as } \tilde{\omega} \to 0\;.
  \]
  This, however, is only possible, if $J_{p_*}(\omega^\dagger) = 0$,
  which is equivalent to stating that $\omega^\dagger = 0$.
  Thus $0 = A^*\omega^\dagger \in J_q(x^\dagger)$, and therefore $x^\dagger = 0$.
\end{proof}

\begin{lemma}
  Assume that $\mathcal{R}$ is two times Fr\'echet differentiable at $x^\dagger$
  and that $J_{p_*}(\omega^\dagger) \in \range A$.
  Then there exists $C > 0$ such that
  \[
  \inner[b]{\omega-\omega^\dagger}{J_{p_*}(\omega^\dagger)}
  \le C\bigl(\mathcal{D}_{x^\dagger}^*(A^*\omega;A^*\omega^\dagger)\bigr)^{1/2}
  \]
  for $\omega$ sufficiently close to $\omega^\dagger$.
\end{lemma}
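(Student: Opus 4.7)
The plan is to use the range condition to turn the left-hand side into an expression that is linear in $A^*(\omega-\omega^\dagger)$, then invoke the Fenchel--Young inequality together with the equality realised at $(x^\dagger,A^*\omega^\dagger)$, and finally exploit the Taylor expansion of $\mathcal{R}$ at $x^\dagger$ to produce a quadratic upper bound which can be optimised against the dual Bregman distance on the right.

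First, using the range condition, pick $v\in X$ with $Av=J_{p_*}(\omega^\dagger)$, so that $\langle\omega-\omega^\dagger,J_{p_*}(\omega^\dagger)\rangle=\langle A^*(\omega-\omega^\dagger),v\rangle$. Next, Fenchel--Young applied at the pair $(x^\dagger+tv,A^*\omega)$ for $t>0$, minus the corresponding equality at $(x^\dagger,A^*\omega^\dagger)$ (which holds because $A^*\omega^\dagger\in\partial\mathcal{R}(x^\dagger)$), regroups into the two Bregman distances and yields the basic inequality
\[
t\,\langle A^*(\omega-\omega^\dagger),v\rangle \;\le\; \mathcal{D}_{A^*\omega^\dagger}(x^\dagger+tv;x^\dagger) + \mathcal{D}^*_{x^\dagger}(A^*\omega;A^*\omega^\dagger).
\]

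Now the twice Fr\'echet differentiability of $\mathcal{R}$ at $x^\dagger$, whose gradient is $A^*\omega^\dagger$, gives the Taylor expansion $\mathcal{D}_{A^*\omega^\dagger}(x^\dagger+tv;x^\dagger)=\tfrac{t^2}{2}\mathcal{R}''(x^\dagger)(v,v)+o(t^2)$, so that constants $M>0$ and $t_0>0$ exist with $\mathcal{D}_{A^*\omega^\dagger}(x^\dagger+tv;x^\dagger)\le M t^2$ on $(0,t_0]$. Dividing the basic inequality by $t$ and minimising the right-hand side $Mt+\mathcal{D}^*_{x^\dagger}(A^*\omega;A^*\omega^\dagger)/t$ at $t^\star=\sqrt{\mathcal{D}^*_{x^\dagger}(A^*\omega;A^*\omega^\dagger)/M}$ delivers
\[
\langle\omega-\omega^\dagger,J_{p_*}(\omega^\dagger)\rangle \;\le\; 2\sqrt{M}\,\sqrt{\mathcal{D}^*_{x^\dagger}(A^*\omega;A^*\omega^\dagger)},
\]
which is the claim with $C:=2\sqrt{M}$.

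The main obstacle is ensuring that the optimal $t^\star$ actually lies in the admissible range $(0,t_0]$; this is exactly what the hypothesis \emph{$\omega$ sufficiently close to $\omega^\dagger$} is there to guarantee, since it forces $\mathcal{D}^*_{x^\dagger}(A^*\omega;A^*\omega^\dagger)\le M t_0^2$. Checking that this gives a genuine neighbourhood amounts to knowing that the dual Bregman distance is continuous at $A^*\omega^\dagger$, which in turn reduces to some continuity of $\mathcal{R}^*$ there; whenever $\mathcal{R}^*(A^*\omega)$ is too large or infinite the desired inequality is anyway trivially satisfied.
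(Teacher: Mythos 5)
Your argument is correct, but it takes a genuinely different route from the paper's. The paper works entirely on the dual side: it converts the local quadratic upper bound $\mathcal{R}(x)\le\mathcal{R}(x^\dagger)+\inner{A^*\omega^\dagger}{x-x^\dagger}+\tfrac{c}{2}\norm{x-x^\dagger}^2$ into the local lower bound $\mathcal{D}^*_{x^\dagger}(A^*\omega;A^*\omega^\dagger)\ge\tfrac{1}{2c}\norm{A^*(\omega-\omega^\dagger)}^2$ by conjugation, and separately invokes the equivalence (citing \cite[Lemma~8.21]{SchGraGroHalLen09}) between $J_{p_*}(\omega^\dagger)\in\range A$ and the estimate $\inner{\omega-\omega^\dagger}{J_{p_*}(\omega^\dagger)}\le\tilde c\,\norm{A^*(\omega-\omega^\dagger)}$; combining the two gives the square root. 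You instead stay on the primal side: you use only the easy direction of the range condition (an explicit preimage $v$ with $Av=J_{p_*}(\omega^\dagger)$), derive the inequality $t\inner{A^*(\omega-\omega^\dagger)}{v}\le\mathcal{D}_{A^*\omega^\dagger}(x^\dagger+tv;x^\dagger)+\mathcal{D}^*_{x^\dagger}(A^*\omega;A^*\omega^\dagger)$ from Fenchel--Young together with the equality case at $(x^\dagger,A^*\omega^\dagger)$ (I checked the regrouping; it is right), and then optimise over $t$. This buys you two things: you avoid the cited lemma entirely, and you avoid the paper's conjugation step, which is stated as an immediate ``consequently'' but in fact requires some care to localise correctly in the dual variable. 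The price is the case distinction at the end: when $\mathcal{D}^*_{x^\dagger}(A^*\omega;A^*\omega^\dagger)>Mt_0^2$ the optimiser $t^\star$ is inadmissible, and the inequality is not literally ``trivially satisfied'' for large \emph{finite} values of the Bregman distance---you also need that the left-hand side is bounded by $\eps\norm{\omega^\dagger}^{p_*-1}$ on the ball $\norm{\omega-\omega^\dagger}\le\eps$, so that enlarging $C$ to at least $\eps\norm{\omega^\dagger}^{p_*-1}/(\sqrt{M}\,t_0)$ closes that case; the genuinely trivial situation is only $\mathcal{D}^*=+\infty$. With that one sentence made explicit, your proof is complete and, if anything, more self-contained than the published one.
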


\begin{proof}
  Because $\mathcal{R}$ is two times Fr\'echet differentiable at $x^\dagger$,
  there exists $c > 0$ such that
  \[
  \mathcal{R}(x) \le \mathcal{R}(x^\dagger) + \inner{A^*\omega^\dagger}{x-x^\dagger} +\frac{c}{2}\norm{x-x^\dagger}^2
  \]
  for every $x \in X$ sufficiently close to $x^\dagger$.
  Consequently,
  \[
  \mathcal{R}^*(A^*\omega) \ge \mathcal{R}^*(A^*\omega^\dagger) + \inner{A^*(\omega-\omega^\dagger)}{x^\dagger}
  +\frac{1}{2c}\norm{A^*(\omega-\omega^\dagger)}^2
  \]
  for $\omega$ sufficiently close to $\omega^\dagger$.
  This, however, is equivalent to stating that,
  locally around $\omega^\dagger$,
  \[
  \mathcal{D}_{x^\dagger}^*(A^*\omega;A^*\omega^\dagger) \ge \frac{1}{2c}\norm{A^*(\omega-\omega^\dagger)}^2\;.
  \]
  Now the assumption that $J_{p_*}(\omega^\dagger) \in \range A$
  is equivalent to the estimate
  \[
  \inner[b]{\omega-\omega^\dagger}{J_{p_*}(\omega^\dagger)} \le  \tilde{c}\norm{A^*(\omega-\omega^\dagger)}
  \]
  for some $\tilde{c} > 0$ and all $\omega \in Y^*$.
  Assembling these inequalities, the assertion follows.
\end{proof}

\section{Conclusion}

In this paper it is shown that the approach of variational inequalities
can be used for the derivation of higher order convergence rates
and is thus not restricted to the ``low rate world'' as has been surmised in~\cite{FleHof10}.
The basic idea for this generalisation is the formulation of the variational inequality
not for the primal Tikhonov functional, but rather for a dual functional.
By this approach we obtain the whole range of convergence rates
that have already been derived for quadratic regularisation and also
for convex regularisation on Banach spaces.
The main advantage of the usage of variational inequalities is
their comparative simplicity, in particular when used in conjunction with non-linear operators;
they have been introduced precisely for the study of non-linear ill-posed operator equations.
Thus it seems reasonable that the approach can be extended
also to the non-linear case without introducing too many artificial constraints
on the operator.

{\small

}

\end{document}